\newtheorem{mainthm}{Theorem}
\newtheorem{conjecture}[mainthm]{Conjecture}
\newtheorem{question}[mainthm]{Question}
\newtheorem{theorem}{Theorem}[section]
\newtheorem{proposition}[theorem]{Proposition}
\newtheorem{corollary}[theorem]{Corollary}
\newtheorem{remark}[theorem]{Remark}
\newtheorem{examplecore}[theorem]{Example}}
\newcommand{\Pic}{\ensuremath{\operatorname{Pic}}}
\newcommand{\op}{\operatorname}
\begin{document}

\title{A refinement of a conjecture of Quillen}

\author{Alexander D. Rahm and Matthias Wendt}
\address{Alexander D. Rahm, Department of Mathematics, National
  University of Ireland at Galway}
\email{Alexander.Rahm@nuigalway.ie}

\address{Matthias Wendt, Fakult\"at Mathematik,
Universit\"at Duisburg-Essen, Thea-Leymann-Strasse 9, 
Essen, Germany} 
\email{matthias.wendt@uni-due.de}


\begin{abstract}
We present some new results on the cohomology of a large scope of SL$_2$-groups in degrees above the virtual cohomological dimension; 
yielding some partial positive results for the Quillen conjecture in rank one.
We combine these results with the known partial positive results and the known types of counterexamples to the Quillen conjecture,
in order to formulate a refined variant of the conjecture.
\end{abstract}
%
\maketitle

\section{Quillen's conjecture - formulation and history} 

In his fundamental work on the structure of equivariant cohomology
rings, cf. \cite{quillen:spectrum}, Quillen formulated a conjecture on
the structure of cohomology rings of certain $S$-arithmetic groups. In the
time that has passed since the formulation of the conjecture, it has
been proved in some cases and disproved in others, but the exact
nature of the conjecture and an explicit description of the cases
where it holds has not yet been found. Our goal in the present note is
to discuss some recent examples which shed new light on Quillen's
conjecture. Guided by these examples, we attempt a refined formulation
of the original conjecture. 

We first 
state Quillen's original conjecture, cf. \cite[Conjecture
14.7, p. 591]{quillen:spectrum}.  For any number field $K$, and any set
of places $S$ of $K$, the natural embedding
$\op{GL}_n(\mathcal{O}_{K,S})\hookrightarrow\op{GL}_n(\mathbb{C})$
induces a restriction map in cohomology
$$
\op{res}_{K,S}:
\op{H}^\bullet(\op{GL}_n(\mathbb{C}),\mathbb{F}_\ell)\to  
\op{H}^\bullet(\op{GL}_n(\mathcal{O}_{K,S}),\mathbb{F}_\ell). 
$$
Moreover, denoting by $c_i$ the $i$-th Chern class in
$\op{H}^\bullet_{\op{cts}}(\op{GL}_n(\mathbb{C}),\mathbb{F}_\ell)$,
there is a change-of-topology map 
$$
\delta:\mathbb{F}_\ell[c_1,\dots,c_n]\cong 
\op{H}^\bullet_{\op{cts}}(\op{GL}_n(\mathbb{C}),\mathbb{F}_\ell)\to 
\op{H}^\bullet(\op{GL}_n(\mathbb{C}),\mathbb{F}_\ell).
$$
The conjecture of Quillen can now be stated as follows: 
\begin{conjecture}[Quillen]
Let $\ell$ be a prime number. Let $K$ be a number field with
$\zeta_\ell\in K$, and $S$ a finite set of places containing the
infinite places and the places over $\ell$. Then the composition
$\op{res}_{K,S}\circ \delta$ makes
$\op{H}^\bullet(\op{GL}_n(\mathcal{O}_{K,S}),\mathbb{F}_\ell)$ a free
module over the cohomology ring
$\op{H}^\bullet_{\op{cts}}(\op{GL}_n(\mathbb{C}),\mathbb{F}_\ell)\cong
\mathbb{F}_\ell[c_1,\dots,c_n]$.  
\end{conjecture}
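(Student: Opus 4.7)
The strategy I would pursue follows the geometric template that Quillen himself established for $\op{GL}_n$ of finite fields. The group $\Gamma = \op{GL}_n(\mathcal{O}_{K,S})$ acts properly on a contractible space $X \times B$, where $X$ is the symmetric space for $\prod_{v\mid\infty}\op{GL}_n(K_v)$ and $B = \prod_{v \in S_f} B_v$ is the product of Bruhat--Tits buildings at the finite places in $S$. The composition $\op{res}_{K,S}\circ\delta$ is then induced by the classifying map $B\Gamma \to B\op{GL}_n(\mathbb{C})$, so freeness of $\op{H}^\bullet(\Gamma,\mathbb{F}_\ell)$ as a module over $\mathbb{F}_\ell[c_1,\dots,c_n]$ is a statement about the behaviour of this classifying map and can be attacked by equivariant methods.

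The main tool would be the isotropy spectral sequence associated to the $\Gamma$-action on $X \times B$, whose $E_1$ page assembles the mod-$\ell$ cohomologies of the cell stabilizers. Because the action of the Chern class ring is defined on the level of classifying spaces, it acts compatibly on the entire spectral sequence, so the proof would split into two independent verifications: first, that on each stabilizer the restriction of the Chern classes makes the stabilizer cohomology a free module over $\mathbb{F}_\ell[c_1,\dots,c_n]$; and second, that the differentials and filtration quotients remain flat over this ring, so that freeness lifts from $E_\infty$ to the abutment.

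For the stabilizer check, the hypothesis $\zeta_\ell \in K$ provides a central subgroup $\mathbb{Z}/\ell \hookrightarrow \Gamma$, and the natural move is to apply Quillen's stratification theorem to reduce the freeness statement on each finite stabilizer to an analysis on elementary abelian $\ell$-subgroups. The Chern classes of the defining representation restrict to these subgroups sufficiently non-degenerately to make freeness plausible in many cases. A parallel analysis, using the parahoric structure of stabilizers in the buildings $B_v$, would be needed at the finite places.

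The main obstacle, and the reason the conjecture has resisted a uniform proof, lies in controlling cohomology classes arising in degrees above the virtual cohomological dimension of $\Gamma$. Such classes can be produced by $K$-theoretic phenomena or by the topology of the Bruhat--Tits quotients, and there is no a priori reason for the Chern classes to act freely on them. This is precisely the regime where counterexamples have been discovered, and it is where the $\op{SL}_2$-results advertised in the abstract would give the most leverage; a successful attack would almost certainly have to identify additional generators beyond $c_1,\dots,c_n$, which is the direction in which the paper's proposed refinement of the conjecture is moving.
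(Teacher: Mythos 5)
This statement is Quillen's original conjecture, and the paper does not prove it --- it cannot be proved as stated, because it is false in general. As the paper itself records, counterexamples are known: Dwyer for $n\geq 32$ and $\ell=2$, Henn--Lannes for $n\geq 14$ and $\ell=2$, and Anton for $n\geq 27$ and $\ell=3$. These arise (via the Henn--Lannes--Schwartz criterion reproduced as Proposition~\ref{prop:hls}) from failures of detection tied to nontrivial class groups of group rings of finite subgroups, and separately (in the function-field analogue of Section~\ref{sec:oneneg}) from classes not detectable on any finite subgroup at all. So the fundamental gap in your proposal is not a missing lemma but the fact that no correct proof exists; any argument that appears to establish freeness for all $n$, $K$, $S$, $\ell$ satisfying the stated hypotheses must contain an error.

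That said, your sketch does describe the standard machinery behind the known \emph{partial} positive results: the action on the product of the symmetric space with Bruhat--Tits buildings, the isotropy (equivariant) spectral sequence, and the reduction to finite stabilizers and their elementary abelian $\ell$-subgroups is exactly how the paper's Theorems~\ref{thm:gl2nf} and~\ref{thm:gl2ff} for $\op{SL}_2$ are obtained, and how Mitchell, Henn, and Anton handled their low-rank cases. The two places where your outline cannot be completed in general are precisely the two steps you flag as delicate: (i) the stabilizer-level freeness fails when the class group obstruction of Section~\ref{sec:neg} is nontrivial, and (ii) the passage from $E_\infty$ to the abutment, and the control of classes above the virtual cohomological dimension, fails when there are non-detectable classes as in Section~\ref{sec:oneneg}. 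Your closing remark that one would ``have to identify additional generators'' and refine the conjecture is essentially the conclusion the paper draws in formulating Conjecture~\ref{conj:quillenref}; but as a proof of the stated conjecture, the proposal does not (and cannot) succeed.
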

The range of validity of the conjecture has not yet been
decided. Positive cases in which the conjecture  has been established
are $n=\ell=2$ by Mitchell \cite{mitchell}, $n=3$, $\ell=2$ by Henn
\cite{henn}, and  $n=2$, $\ell=3$ by Anton \cite{anton}. 
Using \cite[Remark on p. 51]{henn:lannes:schwartz}, counterexamples to
Quillen's conjecture have been established by 
Dwyer \cite{dwyer}  for $n\geq 32$ and $\ell=2$, Henn and Lannes \cite{henn:lannes} for
$n\geq 14$ and $\ell=2$, and by Anton \cite{anton} for $n\geq 27$ and
$\ell=3$. 

Via the remark in \cite[p.51]{henn:lannes:schwartz}, the Quillen
conjecture has been viewed as closely related to the following
question, to which we will refer as ``detection question'' in the
sequel. 

\begin{question}[Detection question]
For which number fields $K$, place sets $S$ of $K$, primes $\ell$  and
natural numbers $n$ is the restriction morphism 
$
\op{H}^\bullet(\op{GL}_n(\mathcal{O}_{K,S}),\mathbb{F}_\ell)\to
\op{H}^\bullet(\op{T}_n(\mathcal{O}_{K,S}),\mathbb{F}_\ell)
$
injective, where $\op{T}_n$ is the group of diagonal matrices in
$\op{GL}_n$?  
\end{question}

However, while the remark in \cite{henn:lannes:schwartz} concerns only
the case $\op{GL}_n(\mathbb{Z}[1/2])$ with
$\mathbb{F}_2$-coefficients, the nature of the relation between
Quillen's conjecture and detection questions has not been made precise
yet. All we found in the literature was the following sentence on p. 13 of
\cite{knudson:book}:
``This [the Quillen conjecture] implies the following conjecture
[the detection question] in many cases.''
Unfortunately, the ``many cases'' are left unspecified. 
A secondary objective of the paper is to clarify the relation between
Quillen's conjecture and detection questions. 

\subsection{Acknowledgements:} We would like to thank Hans-Werner Henn
for an interesting email conversation concerning the remark in
\cite{henn:lannes:schwartz} and the Quillen conjecture. We would also like to thank the anonymous referee for helpful suggestions.

\section{Subgroup structure in high rank - negative results}
\label{sec:neg}

We first discuss the known counterexamples to the Quillen
conjecture. As mentioned above, these are built on a remark in
\cite{henn:lannes:schwartz} together with examples of the failure of
detection (due to non-triviality of class groups of group rings for
sufficiently complicated finite subgroups) as found by Dwyer
\cite{dwyer}, Henn--Lannes \cite{henn:lannes} and Anton
\cite{anton}. We provide a precise 
formulation of the result of Henn--Lannes--Schwartz. The proof given
below is mostly contained in \cite{henn:lannes:schwartz}, details
missing in loc.cit. were explained to us by Hans-Werner Henn - we
claim no originality except for mistakes we might have introduced.

\begin{proposition}[Henn--Lannes--Schwartz]
\label{prop:hls}
Let $\ell$ be a prime number. Let $K$ be a number field with
$\zeta_\ell\in K$, and $S$ a finite set of places containing the
infinite places and the places over $\ell$. 
Assume that all elementary abelian $\ell$-groups  in
$\op{GL}_n(\mathcal{O}_{K,S})$ are conjugate to subgroups of the
diagonal matrices, and that Quillen's 
conjecture holds for $K$, $S$ and $\ell$. Then detection holds for
$K$, $S$ and $\ell$.
\end{proposition}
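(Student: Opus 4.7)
The plan is to combine Quillen's F-isomorphism theorem with the freeness assertion of Quillen's conjecture. Write $G=\op{GL}_n(\mathcal{O}_{K,S})$, $T=\op{T}_n(\mathcal{O}_{K,S})$, and $A=\mathbb{F}_\ell[c_1,\ldots,c_n]$, viewed as a subring of $\op{H}^\bullet(G,\mathbb{F}_\ell)$ via the composition $\delta\circ\op{res}_{K,S}$.

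First, I would show that $\ker(\op{res}_T)$ consists of nilpotent elements. By Quillen's F-isomorphism theorem, the product restriction
$$\op{H}^\bullet(G,\mathbb{F}_\ell)\longrightarrow \prod_{[E]}\op{H}^\bullet(E,\mathbb{F}_\ell),$$
indexed by conjugacy classes $[E]$ of elementary abelian $\ell$-subgroups $E\leq G$, has nilpotent kernel. The standing hypothesis ensures that every such $E$ is conjugate into $T$, so each restriction to $E$ factors through $\op{res}_T$. Hence $\ker(\op{res}_T)$ is contained in the kernel of the product map, and therefore lies in the nilradical of $\op{H}^\bullet(G,\mathbb{F}_\ell)$.

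Next, I would invoke Quillen's conjecture to rule out this nilpotent kernel. Under the conjecture, $\op{H}^\bullet(G,\mathbb{F}_\ell)$ is free as an $A$-module, and $\op{res}_T$ is $A$-linear, so $\ker(\op{res}_T)$ is an $A$-submodule. For a hypothetical nonzero nilpotent $x\in\ker(\op{res}_T)$, freeness over $A$ guarantees that no nonzero $p\in A$ annihilates $x$, so $Ax\subseteq\ker(\op{res}_T)$ is a free $A$-submodule consisting of nilpotents. On the other hand, the diagonal copy of $\mu_\ell^n\hookrightarrow T$ (available because $\zeta_\ell\in K$ and $\ell\in S$) is an elementary abelian $\ell$-subgroup of rank $n$ on which the Chern classes restrict to the elementary symmetric polynomials in algebraically independent polynomial generators. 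I would then argue that the graded-algebra structure of $\op{H}^\bullet(G,\mathbb{F}_\ell)$, combined with the non-nilpotent action of $A$ detected on this diagonal subgroup, is incompatible with the existence of such a free nilpotent $A$-submodule, forcing $x=0$ and thus $\ker(\op{res}_T)=0$.

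The main obstacle is precisely this final incompatibility argument. Freeness over the polynomial ring $A$ does not by itself preclude nilpotent elements—one can easily exhibit graded $A$-algebras that are free $A$-modules yet contain nonzero nilpotents—so the argument must exploit the fact that the Chern-class action comes from the specific multiplicative map $\delta\circ\op{res}_{K,S}$. The cleanest route, and the one sketched in the remark of \cite{henn:lannes:schwartz}, passes through Lannes' $T$-functor and the structure of unstable modules over the Steenrod algebra; this machinery translates Quillen's freeness statement into sufficiently strong information about the reduced ring $\op{H}^\bullet(G,\mathbb{F}_\ell)/\sqrt{0}$ to force $\ker(\op{res}_T)=0$ under the conjugation hypothesis.
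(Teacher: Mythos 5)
Your first step (the kernel of restriction to the torus is nilpotent, via Quillen's F-isomorphism theorem together with the hypothesis that every elementary abelian $\ell$-subgroup is conjugate into $\op{T}_n$) is correct and matches the setup of the paper's argument. But the decisive step --- upgrading ``nilpotent kernel'' to ``zero kernel'' --- is exactly where your proof stops: you correctly observe that freeness over $A=\mathbb{F}_\ell[c_1,\dots,c_n]$ does not by itself exclude nilpotents (e.g.\ $A[y]/(y^2)$ is $A$-free), and you then defer to unspecified Lannes $T$-functor machinery. That deferral is the gap; the statement is not proved.

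The paper closes this gap with a concrete mechanism, namely \cite[Corollary 5.8]{henn:lannes:schwartz}: if $x\in\op{H}^\bullet(\op{GL}_n(\mathcal{O}_{K,S}),\mathbb{F}_\ell)$ is a \emph{non-zero-divisor} whose restriction to $E_0$ (the diagonal elementary abelian subgroup) is non-nilpotent and lies in the \emph{essential ideal} of $\op{H}^\bullet(E_0,\mathbb{F}_\ell)$, then the product of restrictions to all maximal elementary abelian $\ell$-subgroups is honestly injective, not merely injective modulo nilpotents. One must then (a) verify the hypotheses of that corollary (finite vcd, noetherian cohomology ring, and that the collection $\mathcal{C}_x$ of subgroups where $x$ restricts non-nilpotently is exactly the set $\mathcal{M}$ of maximal ones --- this is where the conjugacy hypothesis and essentiality are used); and (b) actually exhibit such an $x$ in the image of $\op{H}^\bullet_{\op{cts}}(\op{GL}_n(\mathbb{C}),\mathbb{F}_\ell)$: for $\ell=2$ the square of the product of all nonzero classes in $\op{H}^1(E_0,\mathbb{F}_2)$, and for odd $\ell$ the product of all nonzero classes in $\op{H}^2(E_0,\mathbb{F}_\ell)$, both of which are Weyl-invariant and hence induced from the Chern classes. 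Quillen's conjecture (freeness) enters only to guarantee that this $x$ is a non-zero-divisor; the contradiction with failure of detection then follows. Your sketch contains neither the citation of this injectivity criterion nor the construction and verification of the essential non-zero-divisor class, so the argument as written does not establish the proposition.
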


\begin{proof}
We assume that detection does not hold, and want to derive a
contradiction. 

(i)  Let $E_0$ be the group of diagonal matrices of order $\ell$. This
is a maximal elementary abelian $\ell$-subgroup of
$\op{GL}_n(\mathcal{O}_{K,S})$. If detection fails, then the 
 restriction map
$$
\op{H}^\bullet(\op{GL}_n(\mathcal{O}_{K,S}),\mathbb{F}_\ell)\to
\op{H}^\bullet(E_0,\mathbb{F}_\ell)
$$
is not injective. This follows from functoriality of group cohomology,
because we have an inclusion
$E_0\leq\op{T}_n(\mathcal{O}_{K,S})\leq\op{GL}_n(\mathcal{O}_{K,S})$
and the restriction map associated to the second map is not injective
by assumption. 
For $g\in \op{GL}_n(\mathcal{O}_{K,S})$, the homomorphism $c\mapsto
gcg^{-1}:\op{GL}_n(\mathcal{O}_{K,S})\to\op{GL}_n(\mathcal{O}_{K,S})$
induces the identity on
$\op{H}^\bullet(\op{GL}_n(\mathcal{O}_{K,S}),\mathbb{F}_\ell)$,
c.f.e.g. \cite[Proposition A.1.11]{knudson:book}. Together with the
above argument, failure of detection implies that the following
product of restriction maps
is also not injective: 
\begin{equation} \label{(1)}
 \op{H}^\bullet(\op{GL}_n(\mathcal{O}_{K,S}),\mathbb{F}_\ell)\to 
\prod_{E\in\mathcal{M}}
\op{H}^\bullet(E,\mathbb{F}_\ell),
\end{equation}
where $\mathcal{M}$ is the set of all maximal elementary abelian
$\ell$-subgroups $E\leq\op{GL}_n(\mathcal{O}_{K,S})$.

(ii) Assume that there exists a class
$x\in\op{H}^\bullet(\op{GL}_n(\mathcal{O}_{K,S}),\mathbb{F}_\ell)$ which is
not a zero-divisor, and whose restriction to $E_0$ is not nilpotent
but in the essential ideal. Then \cite[Corollary
5.8]{henn:lannes:schwartz} implies that the above product of restriction maps (\ref{(1)})
is injective.
The result is applicable since the group
$\op{GL}_n(\mathcal{O}_{K,S})$ is of finite virtual 
cohomological dimension and the cohomology ring
$\op{H}^\bullet(\op{GL}_n(\mathcal{O}_{K,S}),\mathbb{F}_\ell)$ is
noetherian, cf. the discussion in \cite{quillen:spectrum}. 
Recall the collection $\mathcal{C}_x$ of \cite[Corollary
5.8]{henn:lannes:schwartz}: it is obtained as the collection of
elementary abelian $\ell$-subgroups $E$ of
$\op{GL}_n(\mathcal{O}_{K,S})$, such that the restriction res$_E(x)$
is not nilpotent.   
The collection $\mathcal{C}_x$ is equal to $\mathcal{M}$: 
by assumption, the restriction of $x$ to $E_0$ is not nilpotent, and
since all maximal elementary abelian $\ell$-subgroups are conjugate,
the same is true for all other $E\in\mathcal{M}$. On the other hand,
since $x$ is required to restrict to the essential ideal, its
restriction to every proper subgroup of $E_0$ is trivial, so the same
is true for all non-maximal elementary abelian $\ell$-subgroups. 

(iii) It now suffices to find an element
$x\in\op{H}_{\op{cts}}^\bullet(\op{GL}_n(\mathbb{C}),\mathbb{F}_\ell)$ whose
restriction to
$\op{H}^\bullet(\op{GL}_n(\mathcal{O}_{K,S}),\mathbb{F}_\ell)$ has the
properties required in (ii): failure of detection in (i) and the
assumption that $x$ is not a zero-divisor in (ii) contradict each
other. Therefore, $x$ has to be a zero-divisor and hence
$\op{H}^\bullet(\op{GL}_n(\mathcal{O}_{K,S}),\mathbb{F}_\ell)$ 
cannot be a
free $\op{H}_{\op{cts}}^\bullet(\op{GL}_n(\mathbb{C}),\mathbb{F}_\ell)$-module. 

(iv) The proof is completed by producing an element with the properties
in (iii). For the structure of the essential ideal in the cohomology
rings of elementary abelian $\ell$-groups, we refer to
\cite{aksu:green}. 
In the case $\ell=2$, the product of all non-zero classes in
$\op{H}^1(E_0,\mathbb{F}_2)$ is an essential non-zero-divisor; its
square is  induced from the product of all non-zero classes in
$\op{H}^2_{\op{cts}}(\op{GL}_n(\mathbb{C}),\mathbb{F}_2)$. In the case of odd 
$\ell$, the product of all  
non-zero classes in $\op{H}^2(E_0,\mathbb{F}_\ell)$ is an essential
non-zero-divisor which is Weyl-invariant and hence induced from
continuous cohomology of $\op{GL}_n(\mathbb{C})$. 
\end{proof}

\begin{remark}
This proposition justifies the application of \cite[p. 51]{henn:lannes:schwartz}
in \cite{anton}: all elementary abelian $3$-groups of maximal rank in
$\op{GL}_n(\mathbb{Z}[\zeta_3,1/3])$ are conjugate.
\end{remark}

\section{Subgroup structure in rank one - positive results}
\label{sec:onepos}

We next discuss the rank one case, i.e., the groups
$\op{SL}_2(\mathcal{O}_{K,S})$. In this case, the subgroup structure
(and consequently the cohomology above the virtual cohomological
dimension) is under good control. This allows to establish variants
and partial positive results related to the Quillen conjecture. 

\subsection{Quillen conjecture for Farrell-Tate cohomology}

The following is one of the main results of \cite{sl2ff}. It provides
a complete computation of Farrell-Tate cohomology for
$\op{SL}_2(\mathcal{O}_{K,S})$ based on explicit description of
conjugacy classes of finite cyclic subgroups and their normalizers in
$\op{SL}_2(\mathcal{O}_{K,S})$. Similar results can be established for
$\op{(P)GL}_2(\mathcal{O}_{K,S})$.

We first explain some notation. We will consider global fields $K$, place sets $S$ and primes $\ell$, and $\mathcal{O}_{K,S}$ denotes the relevant ring of $S$-integers. In the situation where $\zeta_\ell+\zeta_\ell^{-1}\in K$, we will abuse notation and write $\mathcal{O}_{K,S}[\zeta_\ell]$ to mean the ring $\mathcal{O}_{K,S}[T]/(T^2-(\zeta_\ell+\zeta_\ell^{-1})T+1)$. 
Moreover, we denote the norm maps for class groups and units by  
$$
\op{Nm}_0:
  \widetilde{\op{K}_0}(\mathcal{O}_{K,S}[\zeta_\ell])\to 
  \widetilde{\op{K}_0}(\mathcal{O}_{K,S})
\qquad\textrm{ and }\qquad
\op{Nm}_1:\mathcal{O}_{K,S}[\zeta_\ell]^\times\to
  \mathcal{O}_{K,S}^\times.
$$

\begin{theorem}
\label{thm:gl2nf}
Let $K$ be a global field, let $S$ be a non-empty finite set of places
of $K$ containing the infinite places, and let $\ell$ be an odd prime
different from the characteristic of $K$. 

\begin{enumerate}
\item
  $\widehat{\op{H}}^\bullet(\op{SL}_2(\mathcal{O}_{K,S}),\mathbb{F}_\ell)\neq
  0$ if and only if $\zeta_\ell+\zeta_\ell^{-1}\in K$ and the Steinitz
  class $\det_{\mathcal{O}_{K,S}}(\mathcal{O}_{K,S}[\zeta_\ell])$ is
  contained in the image of the norm map $\op{Nm}_0$.
\item Assume the condition in (1) is satisfied. 
  The set $\mathcal{C}_\ell$ of conjugacy classes of order $\ell$
  elements in $\op{SL}_2(\mathcal{O}_{K,S})$ 
  sits in an extension 
$$
1\to \op{coker}\op{Nm}_1\to \mathcal{C}_\ell\to 
\ker\op{Nm}_0\to 0.
$$
The set $\mathcal{K}_\ell$ of conjugacy classes of order $\ell$
subgroups of $\op{SL}_2(\mathcal{O}_{K,S})$ can be identified with the
quotient 
$\mathcal{K}_\ell=\mathcal{C}_\ell/\op{Gal}(K(\zeta_\ell)/K)$. There is
a direct sum decomposition  
$$
\widehat{\op{H}}^\bullet(\op{SL}_2(\mathcal{O}_{K,S}),\mathbb{F}_\ell)\cong  
\bigoplus_{[\Gamma]\in\mathcal{K}_\ell}
\widehat{\op{H}}^\bullet(N_{\op{SL}_2(\mathcal{O}_{K,S})}(\Gamma),
\mathbb{F}_\ell)
$$
which is compatible with the ring structure, i.e., the Farrell-Tate cohomology ring of $\op{SL}_2(\mathcal{O}_{K,S})$ is a direct sum of the sub-rings
for the subgroups $N_{\op{SL}_2(\mathcal{O}_{K,S})}(\Gamma)$.  

\item If the class of $\Gamma$ is not $\op{Gal}(K(\zeta_\ell)/K)$-invariant, 
then $N_{\op{SL}_2(\mathcal{O}_{K,S})}(\Gamma)\cong \ker\op{Nm}_1$.
There is a ring isomorphism
$$
\widehat{\op{H}}^\bullet(\ker\op{Nm}_1,\mathbb{Z})_{(\ell)}\cong
\mathbb{F}_\ell[a_2,a_2^{-1}]\otimes_{\mathbb{F}_\ell}\bigwedge
\left(\ker\op{Nm}_1\right).
$$ In particular, this is a free module
over the subring $\mathbb{F}_\ell[a_2^2,a_2^{-2}]$.
\item 
If the class of $\Gamma$ is $\op{Gal}(K(\zeta_\ell)/K)$-invariant, 
then there is an extension
$$
0\to \ker\op{Nm}_1\to N_{\op{SL}_2(\mathcal{O}_{K,S})}(\Gamma)\to \mathbb{Z}/2\to 1.
$$
There is a ring isomorphism
$$
\widehat{\op{H}}^\bullet(N_{\op{SL}_2(\mathcal{O}_{K,S})}(\Gamma),\mathbb{Z})_{(\ell)}\cong
\left(\mathbb{F}_\ell[a_2,a_2^{-1}]\otimes_{\mathbb{F}_\ell}\bigwedge
\left(\ker\op{Nm}_1\right)\right)^{\mathbb{Z}/2}, 
$$
with the $\mathbb{Z}/2$-action given by multiplication with $-1$ on
$a_2$ and $\ker\op{Nm}_1$. In particular, this is a free module over
the subring
$\mathbb{F}_\ell[a_2^2,a_2^{-2}]\cong
\widehat{\op{H}}^\bullet(D_{2\ell},\mathbb{Z})_{(\ell)}$. 
\item The restriction map induced from the inclusion
  $\op{SL}_2(\mathcal{O}_{K,S})\to
  \op{SL}_2(\mathbb{C})$ maps the second Chern class
  $c_2$ to the sum of the elements $a_2^2$ in all the components. 
\end{enumerate}
\end{theorem}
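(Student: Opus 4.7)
The plan is to reduce all five parts to an explicit module-theoretic classification of the order-$\ell$ elements and their centralizers/normalizers in $\op{SL}_2(\mathcal{O}_{K,S})$, and then feed this classification into standard machinery for Farrell-Tate cohomology. First, an element $g\in\op{SL}_2(\mathcal{O}_{K,S})$ of order $\ell$ satisfies $T^2-\op{tr}(g)T+1=0$ with $\op{tr}(g)$ equal to $\zeta_\ell^i+\zeta_\ell^{-i}$ for some $i$ coprime to $\ell$; this forces $\zeta_\ell+\zeta_\ell^{-1}\in K$. Conversely, such a $g$ endows $\mathcal{O}_{K,S}^2$ with the structure of a rank-$1$ projective module $M$ over $\mathcal{O}_{K,S}[\zeta_\ell]$ (in the abuse of notation fixed earlier), and every such $M$ whose underlying $\mathcal{O}_{K,S}$-module is free of rank two produces an order-$\ell$ element. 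The Steinitz class of $M$ viewed as an $\mathcal{O}_{K,S}$-module factors as $\op{Nm}_0([M])\cdot\det_{\mathcal{O}_{K,S}}(\mathcal{O}_{K,S}[\zeta_\ell])$, and the triviality needed for $M$ to be $\mathcal{O}_{K,S}$-free gives exactly the condition in part (1).

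For part (2), two order-$\ell$ elements are $\op{SL}_2(\mathcal{O}_{K,S})$-conjugate iff the corresponding $\mathcal{O}_{K,S}[\zeta_\ell]$-modules are isomorphic via an isomorphism of determinant $1$. Isomorphism classes of $M$ are parametrized by $\op{Pic}(\mathcal{O}_{K,S}[\zeta_\ell])$ constrained by (1) to $\ker\op{Nm}_0$, while the residual obstruction to finding an $\op{SL}_2$-representative (rather than merely a $\op{GL}_2$-representative) of a given $M$ is measured by $\op{coker}\op{Nm}_1$: the automorphism group of $M$ as $\mathcal{O}_{K,S}[\zeta_\ell]$-module is $\mathcal{O}_{K,S}[\zeta_\ell]^\times$, and the determinants of these automorphisms over $\mathcal{O}_{K,S}$ cover exactly $\op{im}\op{Nm}_1$. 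The Galois action sends $g$ to $g^{-1}$ (complex conjugation swapping the eigenvalues), so $\mathcal{K}_\ell=\mathcal{C}_\ell/\op{Gal}(K(\zeta_\ell)/K)$. The direct-sum decomposition then follows from Brown's localization theorem for Farrell-Tate cohomology applied to the family of $\ell$-subgroups: for odd $\ell$ the only nontrivial $\ell$-subgroups of $\op{SL}_2$ are cyclic of order $\ell$ (there is no rank-$2$ elementary abelian $\ell$-subgroup of $\op{SL}_2$ for odd $\ell$), so the decomposition is indexed exactly by $\mathcal{K}_\ell$ and is compatible with the ring structure.

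For parts (3)--(4), the centralizer of an order-$\ell$ element $g$ in $\op{SL}_2(\mathcal{O}_{K,S})$ consists of the $\mathcal{O}_{K,S}[g]$-linear automorphisms of $\mathcal{O}_{K,S}^2$ of determinant $1$, which under the module-theoretic dictionary is precisely $\ker\op{Nm}_1$. The normalizer is an extension of the centralizer by a Weyl group of order at most $2$; the order-$2$ element is present iff $g$ and $g^{-1}$ are $\op{SL}_2$-conjugate, i.e., iff the class of $\Gamma$ is Galois-invariant. The group $\ker\op{Nm}_1$ is a finitely generated abelian group whose $\ell$-primary torsion is cyclic generated by $\zeta_\ell$, so its Farrell-Tate cohomology at $\ell$ is computed by the K\"unneth formula together with Tate's computation for the cyclic group $\langle\zeta_\ell\rangle$, yielding the stated exterior algebra with $a_2$ the periodicity generator coming from $\zeta_\ell$. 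When a normalizing involution $w$ is present, conjugation by $w$ inverts $g$, hence acts as the nontrivial Galois element on $\mathcal{O}_{K,S}[\zeta_\ell]^\times$: this inverts $\ker\op{Nm}_1$ and negates $a_2$, so part (4) follows by taking $\mathbb{Z}/2$-invariants.

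Finally, part (5) is the splitting principle: a lift of $g$ to $\op{SL}_2(\mathbb{C})$ diagonalizes with eigenvalues $\zeta_\ell,\zeta_\ell^{-1}$, so $c_2$ restricts to the product of their first Chern classes $\alpha$ and $-\alpha$ in $\op{H}^2(\Gamma,\mathbb{F}_\ell)$; under the identification of generators this product agrees (up to the conventional normalization of $a_2$) with $a_2^2$ in each component of the decomposition from (2). The main technical obstacle will be part (2): extracting the clean extension $1\to\op{coker}\op{Nm}_1\to\mathcal{C}_\ell\to\ker\op{Nm}_0\to 0$ requires careful bookkeeping of when $\op{GL}_2$-conjugate elements fail to be $\op{SL}_2$-conjugate, i.e., the precise interplay between $\op{Pic}$-data and the determinant cosets. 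Once this module-theoretic dictionary is in place, parts (1), (3), (4), (5) follow by the standard tools sketched above.
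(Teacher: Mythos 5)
The paper does not actually prove Theorem~\ref{thm:gl2nf}; it imports it from the preprint \cite{sl2ff} and only sketches the strategy in the paragraph following the corollary. That sketch is geometric: one analyzes the action of $\op{SL}_2(\mathcal{O}_{K,S})$ on the symmetric space $\mathfrak{X}_{K,S}$, determines the subcomplex of points fixed by finite subgroups (locally via the representation on tangent spaces, globally via a Latimer--MacDuffee-type classification), and reads off the cohomology from the fact that each component of that subcomplex is, away from $2$, a classifying space for the normalizer of the corresponding cyclic subgroup. Your number-theoretic dictionary --- order-$\ell$ elements as rank-one projectives over $\mathcal{O}_{K,S}[\zeta_\ell]$, the Steinitz-class criterion for part (1), centralizers as $\ker\op{Nm}_1$, the $\op{coker}\op{Nm}_1$ obstruction separating $\op{GL}_2$- from $\op{SL}_2$-conjugacy --- is exactly the ``extension of the classical Latimer--MacDuffee theorem'' the paper alludes to, so that half of your argument matches. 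Where you genuinely diverge is the decomposition in part (2): you invoke Brown's localization theorem for groups of finite vcd whose elementary abelian $\ell$-subgroups have rank one, which is a legitimate and more algebraic route; the geometric fixed-point-subcomplex approach of \cite{sl2ff} buys more (it also controls cohomology at and below the vcd, which is what makes the parabolic-cohomology analogue of Theorem~\ref{thm:gl2ff} and the explicit low-degree computations possible), while your route gets to the Farrell--Tate statement faster.

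Two places where your proposal is thinner than it needs to be. First, Brown's theorem gives an \emph{additive} isomorphism onto the product of normalizer cohomologies; the claimed compatibility with the ring structure requires the extra observation that products of classes supported on non-conjugate order-$\ell$ subgroups vanish, which you assert but do not argue. Second, in part (2) you have not resolved the bookkeeping of traces: a priori the order-$\ell$ elements fall into $(\ell-1)/2$ trace classes $\zeta_\ell^i+\zeta_\ell^{-i}$, each carrying its own module-theoretic parametrization, so one must say precisely how $\mathcal{C}_\ell$ is normalized (e.g.\ by fixing the eigenvalue pair, equivalently the $\mathcal{O}_{K,S}[\zeta_\ell]$-module structure rather than just the subgroup generated) for the single extension $1\to\op{coker}\op{Nm}_1\to\mathcal{C}_\ell\to\ker\op{Nm}_0\to 0$ and the identification $\mathcal{K}_\ell=\mathcal{C}_\ell/\op{Gal}(K(\zeta_\ell)/K)$ to come out right; you flag this as the main technical obstacle, correctly, but it is the one step that cannot be left as bookkeeping.
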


\begin{corollary}
Let $\ell$ be a prime number. Let $K$ be a number field with
$\zeta_\ell\in K$, and $S$ a finite set of places containing the
infinite places and the places over $\ell$. 
\begin{enumerate}
\item
The Quillen conjecture is true for Farrell-Tate cohomology of 
$\op{SL}_2(\mathcal{O}_{K,S})$. More precisely, the natural morphism  
$
\mathbb{F}_\ell[c_2]\cong
\op{H}^\bullet_{\op{cts}}(\op{SL}_2(\mathbb{C}),\mathbb{F}_\ell)\to
\op{H}^\bullet(\op{SL}_2(\mathcal{O}_{K,S}),\mathbb{F}_\ell)
$
extends to a morphism 
$$
\phi:\mathbb{F}_\ell[c_2,c_2^{-1}]\to
\widehat{\op{H}}^\bullet(\op{SL}_2(\mathcal{O}_{K,S}),\mathbb{F}_\ell)
$$
which makes 
$\widehat{\op{H}}^\bullet(\op{SL}_2(\mathcal{O}_{K,S}),\mathbb{F}_\ell)$
a free $\mathbb{F}_\ell[c_2,c_2^{-1}]$-module. 
\item The Quillen conjecture is true for
  $\op{SL}_2(\mathcal{O}_{K,S})$ in   cohomological degrees above the
  virtual cohomological dimension.  
\end{enumerate}
\end{corollary}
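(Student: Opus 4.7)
First, I will prove part (1) by directly unpacking Theorem~\ref{thm:gl2nf}. The decomposition in part (2) of that theorem realizes $\widehat{\op{H}}^\bullet(\op{SL}_2(\mathcal{O}_{K,S}), \mathbb{F}_\ell)$ as a direct sum of the Farrell--Tate cohomology rings of the normalizers $N_{\op{SL}_2(\mathcal{O}_{K,S})}(\Gamma)$. By parts (3) and (4), each such summand is a free module over $\mathbb{F}_\ell[a_2^2, a_2^{-2}]$. By part (5), the restriction of $c_2$ equals $a_2^2$ in each summand, so in particular $c_2$ acts invertibly on the entire Farrell--Tate cohomology. The natural ring map $\mathbb{F}_\ell[c_2] \to \widehat{\op{H}}^\bullet(\op{SL}_2(\mathcal{O}_{K,S}), \mathbb{F}_\ell)$ therefore factors uniquely through the localisation $\mathbb{F}_\ell[c_2, c_2^{-1}]$, yielding the desired $\phi$. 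Via $\phi$, each summand is a free $\mathbb{F}_\ell[c_2, c_2^{-1}]$-module, and hence so is their direct sum, which proves (1).

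For part (2), the key input is the standard fact that the canonical comparison map $\op{H}^\bullet(G, \mathbb{F}_\ell) \to \widehat{\op{H}}^\bullet(G, \mathbb{F}_\ell)$ is an isomorphism in all degrees strictly above the virtual cohomological dimension whenever $\op{vcd}(G) < \infty$; this applies since $\op{SL}_2(\mathcal{O}_{K,S})$ has finite vcd. Writing $\widehat{\op{H}}^\bullet(\op{SL}_2(\mathcal{O}_{K,S}),\mathbb{F}_\ell) = \bigoplus_i \mathbb{F}_\ell[c_2, c_2^{-1}] \cdot e_i$ with homogeneous generators $e_i$ of degrees $d_i$, its piece in degrees $> \op{vcd}$ decomposes as a direct sum of cyclic free $\mathbb{F}_\ell[c_2]$-modules generated by $c_2^{k_i} e_i$, with $k_i$ the smallest integer such that $d_i + 4 k_i > \op{vcd}$. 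Transporting this free $\mathbb{F}_\ell[c_2]$-module structure along the comparison isomorphism yields (2).

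The proof is essentially bookkeeping on top of Theorem~\ref{thm:gl2nf}; the only point that requires care is ensuring that the comparison map from ordinary to Farrell--Tate cohomology is $\mathbb{F}_\ell[c_2]$-linear in a way compatible with the decomposition, so that the $c_2$-action is correctly identified with the sum of the $a_2^2$ elements in the various summands. This is precisely what part (5) of Theorem~\ref{thm:gl2nf} provides, so no substantive obstacle remains.
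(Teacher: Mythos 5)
Your argument is correct and is exactly the derivation the paper intends: the corollary is stated there without an explicit proof, as an immediate consequence of Theorem~\ref{thm:gl2nf}, and you supply precisely the required bookkeeping (invertibility of the image of $c_2$ via part (5), freeness of each summand over $\mathbb{F}_\ell[a_2^2,a_2^{-2}]$ via parts (3)--(4), and the standard identification of Farrell--Tate with ordinary cohomology above the virtual cohomological dimension). The only caveat, which is inherited from the paper itself rather than introduced by you, is that Theorem~\ref{thm:gl2nf} is stated for odd $\ell$ only, so the case $\ell=2$ of the corollary rests on the analogous results of \cite{sl2ff}.
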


\begin{remark}
Above the virtual cohomological dimension, this is an
$\op{SL}_2$-analogue of the results of  \cite{anton}. 
\end{remark}

On the other hand, there are examples of the failure of detection for
$\op{SL}_2$. In particular, the Quillen conjecture does not generally
imply detection; some non-trivial hypothesis is necessary in
Proposition~\ref{prop:hls}. A rather simple example for the failure of
detection is given by $K=\mathbb{Q}(\zeta_{23})$, $S=\{(23)\}\cup
S_\infty$ and $\ell=23$. In this case, there are three conjugacy classes of
elements of order $23$ (corresponding to $\mathbb{Q}(\zeta_{23})$ having class number three) and two conjugacy classes of cyclic subgroups of order $23$ (the two non-trivial classes of elements forming one conjugacy orbit). Detection fails by a simple rank 
argument - the source of the restriction map has two copies of the cohomology of a dihedral extension of $\mathcal{O}_{K,S}^\times$, the target only one copy of the cohomology of $\mathcal{O}_{K,S}^\times$, cf. \cite{sl2ff}. A similar class of examples is  given by $K=\mathbb{Q}(\sqrt{-m},\zeta_3)$ with $m\equiv  1\mod 3$, $S=\{(3)\}\cup S_\infty$, $\ell=3$ for those (infinitely many) $m$ for which $K$ has class number $\geq 3$.  
These examples for the failure of detection apply to Farrell-Tate cohomology as well as to group cohomology above the virtual cohomological dimension. 

The computation of
$\widehat{\op{H}}^\bullet(\op{SL}_2(\mathcal{O}_{K,S}),\mathbb{F}_\ell)$
is obtained by considering the action of
$\op{SL}_2(\mathcal{O}_{K,S})$ on the associated symmetric space
$\mathfrak{X}_{K,S}$ (which is a product of hyperbolic upper half
spaces for the complex places, upper half planes for the real places,
and Bruhat-Tits trees for the finite places). It is possible to
describe completely the subspace of $\mathfrak{X}_{K,S}$ consisting of
points fixed by some finite subgroup. The local structure of this
subcomplex is determined by examining the representation theory of the
relevant finite groups on the ``tangent space'' of
$\mathfrak{X}_{K,S}$. The global structure only depends on
number-theoretic data: the connected components are in bijection with
conjugacy classes of finite cyclic subgroups, and the homotopy type of
each connected component is (up to the prime $2$) the classifying
space of the normalizer of the corresponding finite subgroup. The
conjugacy classification of finite cyclic subgroups and the
description of the normalizers is an extension of the classical
Latimer-MacDuffee theorem. After having this precise description, the
computation of the Farrell-Tate cohomology of
$\op{SL}_2(\mathcal{O}_{K,S})$ is immediate.

\subsection{Quillen conjecture in function field situations}

The Quillen conjecture can also be asked in function field
situations. Let $p$ and $\ell$ be distinct primes. 
By Quillen's computations, there is a natural element
$c_2\in \op{H}^4(\op{SL}_2(\overline{\mathbb{F}_p}),\mathbb{F}_\ell)$
such that we have an identification 
$\op{H}^\bullet(\op{SL}_2(\overline{\mathbb{F}_p}),\mathbb{F}_\ell)\cong
\mathbb{F}_\ell[c_2]$. This element comes from the roots of unity,
hence exists over any algebraically closed field of characteristic
$p$. In particular, there is a natural summand $\mathbb{F}_\ell[c_2]$
in $\op{H}^\bullet(\op{SL}_2(K),\mathbb{F}_\ell)$ for any
algebraically closed field $K$ of characteristic $p$. Note that
Friedlander's generalized isomorphism conjecture predicts that this
summand is the whole cohomology. 

It is then possible to ask if the natural map 
$$
\phi:\mathbb{F}_\ell[c_2]\to
\op{H}^\bullet(\op{SL}_2(k[C]),\mathbb{F}_\ell) 
$$
makes the cohomology ring a free module over the image of $\phi$, when
$k=\mathbb{F}_q$ such that $\ell\mid q-1$ or $k$ is an algebraically
closed field. The answer is similar to the number field case discussed
above, which follows from (a slight reformulation of) the results of
\cite{sl2parabolic}. 

\begin{theorem}
\label{thm:gl2ff}
Let $k=\mathbb{F}_q$ be a finite field, let $\ell$ be a
prime with $\ell\mid q-1$. Let $\overline{C}$ be a smooth projective
curve over $k$, let $P_1,\dots,P_s\in \overline{C}$ be closed points, and set
$C=\overline{C}\setminus\{P_1,\dots,P_s\}$.  
Then the parabolic cohomology (as defined in \cite{sl2parabolic}) has
a direct sum decomposition 
$\widehat{\op{H}}^\bullet(\op{SL}_2(k[C]),\mathbb{F}_\ell)\cong
  \bigoplus_{[\mathcal{L}]\in\mathcal{K}(C)}
\widehat{\op{H}}^\bullet(\Gamma_C(\mathcal{L}),\mathbb{F}_\ell)$, 
where the index set $\mathcal{K}(C)$ is the quotient set
  $\mathcal{K}(C)=\Pic(C)/\iota$ of the Picard   group of $C$ modulo
  the involution $\iota:\mathcal{L}\mapsto \mathcal{L}^{-1}$.  
The components of this direct sum are:
\begin{enumerate}
\item If 
  $\mathcal{L}|_C\not\cong\mathcal{L}|_C^{-1}$, then 
$
\widehat{\op{H}}^\bullet(\Gamma_C(\mathcal{L}),\mathbb{F}_\ell)\cong 
\op{H}^\bullet(k[C]^\times,\mathbb{F}_\ell).
$
\item If 
  $\mathcal{L}|_C\cong\mathcal{L}|_C^{-1}$, then 
$
\widehat{\op{H}}^\bullet(\Gamma_C(\mathcal{L}),\mathbb{F}_\ell)\cong 
\op{H}^\bullet(\widetilde{\mathcal{SN}},\mathbb{F}_\ell),
$
where $\widetilde{{\mathcal{SN}}}$ denotes the group of monomial matrices in
$\op{SL}_2(k[C])$.
\end{enumerate}
Since $\widehat{\op{H}}^i(\op{SL}_2(k[C]),\mathbb{F}_\ell)\cong
\op{H}^i(\op{SL}_2(k[C]),\mathbb{F}_\ell)$ 
for $i$ greater than the virtual $p'$-cohomological dimension of
$\op{SL}_2(k[C])$,  the above function field analogue of Quillen's
conjecture holds in those degrees.
\end{theorem}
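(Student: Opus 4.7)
The plan is to derive the claim from the main results of \cite{sl2parabolic} by re-indexing the output in terms of the Picard group of $C$. The underlying strategy mirrors the number field case (Theorem~\ref{thm:gl2nf}): since $\ell \neq p$, every finite $\ell$-subgroup of $\op{SL}_2(k[C])$ fixes a point in the product of Bruhat-Tits trees at the removed places $P_1,\dots,P_s$, so the parabolic (Farrell-Tate) cohomology of \cite{sl2parabolic} decomposes as a direct sum over connected components of the singular subcomplex of this product, with each component contributing the $\mathbb{F}_\ell$-cohomology of the normalizer of a representative finite cyclic $\ell$-subgroup.

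The next step is a Latimer-MacDuffee classification in the function field setting. Since $\ell \mid q-1$, we have $\zeta_\ell \in k \subseteq k[C]$, and any order-$\ell$ element $\gamma \in \op{SL}_2(k[C])$ has split characteristic polynomial $(T-\zeta_\ell)(T-\zeta_\ell^{-1})$. Thus $k[C]^{\oplus 2}$ decomposes rationally into eigenlines for $\gamma$, and the integral conjugacy class is encoded by the isomorphism class of the pair of rank-one projective $k[C]$-submodules spanning these lines. The determinant-one constraint forces the second eigenmodule to be the inverse in $\Pic(C)$ of the first, so conjugacy classes of order-$\ell$ elements are in bijection with $\Pic(C)$. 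Passing from elements to cyclic subgroups identifies $\gamma$ with $\gamma^{-1}$, i.e.\ swaps the eigenvalues $\zeta_\ell \leftrightarrow \zeta_\ell^{-1}$; on the Picard side this becomes the involution $\iota\colon\mathcal{L}\mapsto\mathcal{L}^{-1}$, giving the index set $\mathcal{K}(C) = \Pic(C)/\iota$.

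For the normalizer $\Gamma_C(\mathcal{L})$ of a representative $\gamma$, the centralizer is the diagonal torus $\{\op{diag}(u,u^{-1}) \mid u \in k[C]^\times\} \cong k[C]^\times$, which yields case (1). A Weyl element swapping the two eigenlines is integral precisely when the two rank-one eigenmodules are interchangeable by a $k[C]$-linear isomorphism, i.e.\ when $\mathcal{L}|_C \cong \mathcal{L}|_C^{-1}$; the normalizer is then the monomial group $\widetilde{\mathcal{SN}}$, yielding case (2). Assembling the direct sum over $\mathcal{K}(C)$ produces the stated formula for parabolic cohomology, and the final sentence follows from the standard comparison $\widehat{\op{H}}^i \cong \op{H}^i$ of parabolic/Farrell-Tate and ordinary cohomology above the virtual $p'$-cohomological dimension.

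The main obstacle is executing the Latimer-MacDuffee classification carefully in the parabolic setting: one must track how restriction of line bundles from the compactification $\overline{C}$ to the open curve $C$ interacts with the Galois involution, and verify that the 2-torsion condition distinguishing the cyclic case from the dihedral case is $\mathcal{L}|_C \cong \mathcal{L}|_C^{-1}$ rather than the analogous condition on $\overline{C}$. This is precisely the ``slight reformulation'' of \cite{sl2parabolic} referred to in the theorem statement.
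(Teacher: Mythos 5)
Your proposal follows essentially the same route as the paper: the paper likewise only sketches the argument, reducing to \cite{sl2parabolic} via the action on the product of Bruhat--Tits trees at the removed places, identifying the components of the parabolic subcomplex with $\Pic(C)/\iota$ by a Latimer--MacDuffee-type classification, and reading off the cohomology from the setwise stabilizers (torus $\cong k[C]^\times$ versus its dihedral/monomial extension according to whether $\mathcal{L}|_C\cong\mathcal{L}|_C^{-1}$). Your added detail on the eigenmodule decomposition and the element-versus-subgroup quotient is consistent with the paper's outline and with the number-field analogue in Theorem~\ref{thm:gl2nf}.
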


The proof strategy is similar to the number field case: consider the
action of $\op{SL}_2(k[C])$ on the associated symmetric space (which
is a product of Bruhat-Tits trees for the places at infinity). It is
then possible to work out explicitly the structure of the subcomplex
of cells which are fixed by a non-unipotent non-central subgroup. The
components of this ``parabolic subcomplex'' are in bijection with a
quotient of the Picard group, and each component (up to the prime $2$)
has the homotopy type of the classifying space of its setwise
stabilizer. From this, again, the computation of the relevant
cohomology is immediate. 

Further explicit computations  exhibit function field
cases where Quillen's conjecture holds in all cohomological
degrees. 
The function field analogue of Quillen's
conjecture is true for $\op{SL}_2(\mathbb{F}_q[C])$ in the following
cases, which in some sense can be considered function field analogues
of the results of Mitchell \cite{mitchell} and Anton \cite{anton}:  
\begin{enumerate}
\item $C=\mathbb{P}^1\setminus\{\infty\}$ (Soul{\'e}),
\item $C=\mathbb{P}^1\setminus\{0,\infty\}$ and
  $\mathbb{P}^1\setminus\{0,1,\infty\}$ (\cite{sl2parabolic}, but see
  also \cite[Section 4.4]{knudson:book} and \cite{hutchinson})
\item $C=\overline{E}\setminus\{P\}$ with $\overline{E}$ an elliptic
  curve with a $k$-rational point $P$ (\cite[Section
  4.5]{knudson:book}). 
\end{enumerate}

\section{Non-detectable cohomology classes}
\label{sec:oneneg}

In the previous section, we have seen some positive results concerning
the Quillen conjecture in rank one, and we have seen that the results
for the number field and function field cases are close analogues. In
the function field case, however, it is possible to get new examples
of cases where the Quillen conjecture fails badly,
cf. \cite{sl2p1}. 

\begin{theorem}
Let $k=\mathbb{F}_q$ with $q\geq 11$, and let $\ell\nmid q$ be a prime. For
$C=\mathbb{P}^1\setminus\{0,1,\infty, P\}$ for some $k$-rational point
$P$, there exist cohomology classes in
$\op{H}^4(\op{GL}_2(\mathbb{F}_q[C]),\mathbb{F}_\ell)$ which 
cannot  be detected on any maximal torus or any finite subgroup. 
\end{theorem}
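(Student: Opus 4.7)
The plan is to exploit the action of $G = \op{GL}_2(\mathbb{F}_q[C])$ on its associated Bruhat-Tits building. For $C = \mathbb{P}^1\setminus\{0,1,\infty,P\}$, this building is the product $X = T_0\times T_1\times T_\infty\times T_P$ of four Bruhat-Tits trees, one for each puncture, so $\dim X = 4$ and $\op{H}^4$ sits at the top of the isotropy spectral sequence
\[
E_1^{p,q} = \bigoplus_{\sigma\in\Sigma_p}\op{H}^q\bigl(\op{Stab}_G(\sigma),\mathbb{F}_\ell\bigr) \;\Longrightarrow\; \op{H}^{p+q}(G,\mathbb{F}_\ell),
\]
where $\Sigma_p$ is a set of representatives of the $G$-orbits of $p$-cells of $X$. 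The classes of interest will arise from $E_\infty^{4,0}$, i.e.\ from combinatorial $4$-cocycles on the quotient complex $G\backslash X$.

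First, I would analyse the $4$-skeleton of $G\backslash X$ by adapting the parabolic/Latimer--MacDuffee analysis underlying \prettyref{thm:gl2ff} from $\op{SL}_2$ to $\op{GL}_2$ and from two to four cusps. This splits $\Sigma_4$ into a parabolic part, whose $4$-cells have stabilizers containing $\ell$-torsion and contribute to the parabolic summand indexed by $\op{Pic}(C)/\iota$, and a complementary interior part, whose stabilizers are unipotent-by-finite of order coprime to $\ell$. Classes living in the interior part of $E_\infty^{4,0}$ are, by construction, orthogonal to the parabolic/torsion summand of the cohomology.

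Second, I would exhibit an explicit non-trivial class in $\op{H}^4(G,\mathbb{F}_\ell)$ supported on the interior part and deduce non-detection. Non-detection on maximal tori follows because every $T\leq G$ stabilizes an apartment $A_T\subset X$ whose image in $G\backslash X$ has dimension strictly less than $4$, so a $4$-cochain whose support avoids the $T$-orbits restricts trivially to $\op{H}^\bullet(T,\mathbb{F}_\ell)$. Non-detection on finite subgroups follows from the fact that any finite subgroup fixes a cell of $X$, so its contribution to the spectral sequence sits in columns $p<4$ with $q>0$, not in $E_\infty^{4,0}$; combined with the parabolic splitting above, the restriction of an interior class to any finite subgroup vanishes. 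The numerical hypothesis $q\geq 11$ enters precisely to ensure that the interior subcomplex of $G\backslash X$ carries a non-trivial $4$-cocycle: when $q$ is small the interior $4$-cells are too few to survive the $3$-coboundaries, and $q\geq 11$ is the threshold at which the dimension count for $E_\infty^{4,0}$ in the interior part becomes strictly positive.

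The main obstacle is the second step: producing a closed interior $4$-cochain that is not a coboundary, and then checking that it survives the outgoing differentials $d_r^{4-r,r-1}\colon E_r^{4-r,r-1}\to E_r^{4,0}$. In principle this is a combinatorial computation on the $4$-skeleton of $G\backslash X$, but the arithmetic of $\mathbb{F}_q[C]$ enters through the description of the stabilizers and their cohomology; the bound $q\geq 11$ should appear as the smallest value of $q$ for which the interior $4$-cell count strictly exceeds the rank of the boundary map from the relevant interior $3$-cells, so that an explicit non-trivial class can be written down and shown to persist to $E_\infty$.
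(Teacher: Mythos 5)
Your overall strategy---producing classes in $E_\infty^{4,0}$ of the isotropy spectral sequence for the action of $G=\op{GL}_2(\mathbb{F}_q[C])$ on the product $\mathfrak{X}_C$ of the four Bruhat--Tits trees at $0,1,\infty,P$---is exactly the strategy the paper indicates; note that the paper itself gives no proof of this theorem, only a two-sentence sketch with a reference to the work in progress \cite{sl2p1}. The genuine gap in your proposal is that the entire mathematical content of the statement is the step you explicitly defer: exhibiting a non-zero class in $E_2^{4,0}\cong\op{H}^4(G\backslash\mathfrak{X}_C;\mathbb{F}_\ell)$ and showing it survives the differentials $d_r^{4-r,r-1}$. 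This is precisely where the geometry of the quotient (which the paper attributes to the positive-dimensionality of the configuration space of four points on $\mathbb{P}^1$) and the hypothesis $q\geq 11$ enter; your reading of $q\geq 11$ as a cell-counting threshold is plausible but unsubstantiated, and without this existence step nothing has been proved.

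In addition, both of your non-detection arguments are misstated. An apartment in a product of four trees is a copy of $\mathbb{R}^4$, so ``the image of $A_T$ in $G\backslash X$ has dimension strictly less than $4$'' is false as written, and ``a $4$-cochain whose support avoids the $T$-orbits'' is not a well-defined condition on a cohomology class. The mechanism that actually works is filtration-theoretic: restriction along $T\leq G$ carries $F^4\op{H}^4(G)=E_\infty^{4,0}$ into $F^4\op{H}^4_T(\mathfrak{X}_C)$, and the latter vanishes because $\mathfrak{X}_C$ retracts $T$-equivariantly onto the invariant apartment, on which the split torus acts through a translation lattice of rank $3$ (the product formula for the four punctures), so that $T\backslash\mathfrak{X}_C$ is homotopy equivalent to a complex of dimension at most $3$ and $E_2^{4,0}$ for the $T$-action is zero; non-split maximal tori require a separate but analogous discussion. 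Similarly, for a finite subgroup $F$ the relevant point is not that $F$ ``contributes to columns $p<4$''---vertex stabilizers contribute to column $p=0$ in every row---but that $F$ fixes a point of the CAT(0) space $\mathfrak{X}_C$, so $F\backslash\mathfrak{X}_C$ is contractible, $E_2^{4,0}$ for the $F$-action vanishes, and every class in $F^4\op{H}^4(G)$ restricts to zero on $F$. Once these arguments are repaired, your ``parabolic versus interior'' decomposition becomes superfluous: \emph{any} non-zero class in $E_\infty^{4,0}$ is automatically undetectable on tori and finite subgroups, so the whole proof reduces to the existence step you have not supplied.
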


This result is proved by considering the action of $\op{GL}_2(k[C])$
on the associated building $\mathfrak{X}_C$ which is a product of four
Bruhat-Tits trees corresponding to the four points $0$, $1$, $\infty$
and $P$ on $\mathbb{P}^1$. The existence of many non-trivial cells in
the quotient $\op{GL}_2(k[C])\backslash\mathfrak{X}_C$ can basically
be traced to the fact that the configuration space of 4 points on
$\mathbb{P}^1$ is positive-dimensional. Similar results can be
obtained for $\mathbb{P}^1\setminus\{P_1,\dots,P_s\}$ with $s\geq 5$.

These counterexamples to the Quillen conjecture are of a different
nature than those discussed in Section~\ref{sec:neg} - they are not
related to finite subgroups, in fact they cannot  be detected on any
finite subgroup. This is a new obstruction to the Quillen conjecture,
which instead is (somehow) related to cusp forms. 

In the spirit of the analogy between number fields and function
fields, it makes sense to expect that the Quillen conjecture fails for
$\op{GL}_2(\mathbb{Z}[1/n])$ where $n$ has at least $3$ prime factors
(and hence the curve $\mathbb{Z}[1/n]$ has at least $4$ places ``at
infinity''). 

\section{Refinement of Quillen's conjecture}
\label{sec:refine}

With the results outlined in the previous sections, we now have some
more positive and negative cases of the Quillen conjecture at our
disposal. Assuming that all reasons for potential counterexamples
have been accounted for, we arrive at the 
following refinement of Quillen's conjecture. 

\begin{conjecture}
\label{conj:quillenref}
Let $K$ be a number field. Fix a prime $\ell$  such that
$\zeta_\ell\in K$, and an integer $n<\ell$. Assume that $S$ is a set
of places containing the infinite places and the places lying over
$\ell$.
If each cohomology class of $\op{GL}_n(\mathcal{O}_{K,S})$ is
detected on some finite subgroup, then
$\op{H}^\bullet(\op{GL}_n(\mathcal{O}_{K,S}),\mathbb{F}_\ell)$ is a 
free module over the image of the restriction map
$\op{H}^\bullet_{\op{cts}}(\op{GL}_n(\mathbb{C}),\mathbb{F}_\ell)\to 
\op{H}^\bullet(\op{GL}_n(\mathcal{O}_{K,S}),\mathbb{F}_\ell)$.
\end{conjecture}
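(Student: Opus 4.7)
The plan is to combine the detection hypothesis with Quillen stratification, using the numerical constraint $n<\ell$ to control the structure of elementary abelian $\ell$-subgroups. First I would reduce to elementary abelian $\ell$-subgroups: the detection hypothesis gives an injection
$$
\op{H}^\bullet(\op{GL}_n(\mathcal{O}_{K,S}),\mathbb{F}_\ell)\hookrightarrow\prod_F \op{H}^\bullet(F,\mathbb{F}_\ell),
$$
where $F$ runs over conjugacy class representatives of finite subgroups. Since $\op{GL}_n(\mathcal{O}_{K,S})$ has finite virtual cohomological dimension and noetherian mod-$\ell$ cohomology, Quillen's stratification refines this, modulo nilpotents, to an injection indexed by elementary abelian $\ell$-subgroups; following step (i) in the proof of Proposition~\ref{prop:hls}, one may restrict further to maximal elementary abelians.

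Second I would exploit $n<\ell$ together with $\zeta_\ell\in K$ to describe these subgroups. Any order-$\ell$ element of $\op{GL}_n(\mathcal{O}_{K,S})$ satisfies a polynomial dividing $X^\ell-1$, which splits into linear factors over $K$; hence every such element is diagonalizable over $K$, and commuting ones are simultaneously diagonalizable. So each elementary abelian $\ell$-subgroup $E$ is conjugate in $\op{GL}_n(K)$ to a subgroup of the diagonal torus, and the remaining task is to promote this $K$-conjugacy to an $\mathcal{O}_{K,S}$-conjugacy: a Latimer-MacDuffee style problem generalizing the rank-one analysis in Theorem~\ref{thm:gl2nf}. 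Once every $E$ is known to sit inside $\op{T}_n(\mathcal{O}_{K,S})$ up to conjugacy, the restriction of $c_i$ to $\op{H}^\bullet(E,\mathbb{F}_\ell)$ is the $i$-th elementary symmetric function in the Chern roots.

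Third, deducing freeness should follow from invariant theory. Combining the stratification injection with the fact that $\op{H}^\bullet(E,\mathbb{F}_\ell)$ is free over the subring generated by these symmetric functions, and using normalizer/Weyl-group arguments (mirroring the invariant computations in parts (3)--(4) of Theorem~\ref{thm:gl2nf}), one lifts a basis through the injection to a basis of $\op{H}^\bullet(\op{GL}_n(\mathcal{O}_{K,S}),\mathbb{F}_\ell)$ over the image of $\mathbb{F}_\ell[c_1,\dots,c_n]$.

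The hardest step will be the second, specifically the passage from $K$-conjugacy to $\mathcal{O}_{K,S}$-conjugacy. Even in rank one, Theorem~\ref{thm:gl2nf} shows that Steinitz classes and norm maps from $\mathcal{O}_{K,S}[\zeta_\ell]$ enter delicately, and the set of conjugacy classes of order-$\ell$ subgroups is generally nontrivial; a higher-rank Latimer-MacDuffee classification, together with a precise identification of which class-group obstructions actually produce non-conjugate elementary abelians, will be required. A secondary difficulty is to verify that the stratification injection remains compatible with the possibly disconnected picture analogous to Theorem~\ref{thm:gl2nf}(2), where several orbits of finite subgroups contribute parallel summands, and to confirm that the module structure assembles coherently across these summands into freeness over $\mathbb{F}_\ell[c_1,\dots,c_n]$.
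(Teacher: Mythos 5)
The statement you are trying to prove is not a theorem of the paper but an open conjecture: the authors formulate it as a \emph{refinement} of Quillen's conjecture, verify it only in the rank-one case $\op{SL}_2$ (via Theorems~\ref{thm:gl2nf} and \ref{thm:gl2ff}), and explain why the hypotheses $n<\ell$ and detection-on-finite-subgroups exclude the known counterexamples. No proof exists in the paper, so your proposal should be judged as a research outline, and as such it has gaps that go beyond the two difficulties you flag.

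The most serious gap is in your first and third steps. Quillen stratification identifies $\op{H}^\bullet(\op{GL}_n(\mathcal{O}_{K,S}),\mathbb{F}_\ell)$ with a limit over elementary abelian $\ell$-subgroups only \emph{up to $F$-isomorphism}, i.e.\ modulo nilpotents; freeness of a module is not a property that survives inverting nilpotents, so ``refining the injection modulo nilpotents'' cannot control the module structure you need. (The paper's own use of this circle of ideas, in Proposition~\ref{prop:hls}, relies on the genuine injectivity criterion of Henn--Lannes--Schwartz, which requires producing a non-zero-divisor restricting to the essential ideal --- and is used there to derive \emph{counterexamples}, not freeness.) Moreover, even a genuine injection into a product of modules that are free over the symmetric functions does not yield freeness of the source: for $n\geq 2$ the ring $\mathbb{F}_\ell[c_1,\dots,c_n]$ is not a PID, so torsion-free (or embeddable-in-free) does not imply free. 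The paper explicitly calls this passage ``an implicit leap of faith,'' noting it is automatic only for $\op{SL}_2$ where the coefficient ring is a polynomial ring in one variable. Your second step's difficulty --- promoting $K$-conjugacy of elementary abelians to $\mathcal{O}_{K,S}$-conjugacy --- is real and you rightly single it out, but note that this is not merely technical: the class-group obstructions living there are precisely what produce the Dwyer, Henn--Lannes and Anton counterexamples to the original conjecture, and no argument is offered (in the paper or in your sketch) that $n<\ell$ kills them; the paper only expresses the ``hope'' that it does. Until these three issues are resolved, the proposal is a plausible plan of attack consistent with the paper's own suggested program, not a proof.
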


We now discuss how the above refinement of Quillen's conjecture fits
in the landscape of known examples and counterexamples. 

\begin{enumerate}
\item 
Conjecture~\ref{conj:quillenref} is true for $\op{SL}_2$, as follows
from Theorem~\ref{thm:gl2nf} and Theorem~\ref{thm:gl2ff}. 
\item Requiring $\ell>n$ implies that $\ell$ does not divide the order
  of the Weyl group. All counterexamples of Section~\ref{sec:neg} are
  excluded by this requirement; the known counterexamples are for
  primes $2$ and $3$ in high enough rank. Generally, finite subgroups
  in Lie groups are  fairly complicated to handle. However, the special case of normalizers of elementary
  abelian subgroups for $\ell$ not dividing the order of the Weyl
  group is substantially simpler, cf. \cite{serre}; it is much closer
  to the rank one case of Section~\ref{sec:onepos}. One could hope that
  the groups appearing do not give rise to counterexamples coming from
  non-triviality of class groups of representation rings as in
  Section~\ref{sec:neg}. 
\item Requiring that all cohomology classes are detected on some
  finite subgroup excludes counterexamples of the type discussed in
  Section~\ref{sec:oneneg} (and allows the passage from $\op{SL}_2$ to
  $\op{GL}_2$ in Section~\ref{sec:onepos}). However, the results of
  Section~\ref{sec:oneneg} show that this requirement (at least in
  function field situations) is only rarely satisfied. 
\end{enumerate}

Finally, we should note that there is an implicit leap of faith in the
above conjecture lying in the passage from torsion-free modules to
free modules. Showing that the module is torsion-free is easier, we
only need to show that the second Chern class is not a
zero-divisor. The passage from torsion-free modules to free
modules is automatic in the case $\op{SL}_2$ because the cohomology
ring is a polynomial ring in one variable; but this may be much more
subtle in higher rank cases.
 
Certainly, the work done for the results in Section~\ref{sec:onepos}
shows the way how to investigate Conjecture~\ref{conj:quillenref},
cf. \cite{sl2ff}: away 
from the Weyl group, it is possible to work out the classification of
finite subgroups much more easily, cf. \cite{serre}. Then one can
consider the action of an $S$-arithmetic group $G(\mathcal{O}_{K,S})$
on the corresponding symmetric space. The structure of the
subcomplex fixed by finite-order elements can be understood locally in
terms of the representation of the finite subgroups on the tangent
spaces of their fixed points. Conjugacy classification of finite
subgroups in $S$-arithmetic groups can be reduced to number theory by
counting conjugacy classes in terms of ideal classes in suitable ring
extensions. The normalizers of finite subgroups of arithmetic groups
can be understood in terms of parabolic subgroups in algebraic
groups. The final hurdle is  the evaluation of the spectral sequence
and the description of the differentials. At least the case
$\op{SL}_3$ can still be understood, and will be investigated in a
forthcoming paper.

\end{document}